\begin{document}
\setlength{\oddsidemargin}{0cm} \setlength{\evensidemargin}{0cm}
\baselineskip=20pt

\theoremstyle{plain} \makeatletter
\newtheorem{theorem}{Theorem}[section]
\newtheorem{proposition}[theorem]{Proposition}
\newtheorem{lemma}[theorem]{Lemma}
\newtheorem{coro}[theorem]{Corollary}

\theoremstyle{definition}
\newtheorem{defi}[theorem]{Definition}
\newtheorem{notation}[theorem]{Notation}
\newtheorem{exam}[theorem]{Example}
\newtheorem{prop}[theorem]{Proposition}
\newtheorem{conj}[theorem]{Conjecture}
\newtheorem{prob}[theorem]{Problem}
\newtheorem{remark}[theorem]{Remark}
\newtheorem{claim}{Claim}

\newcommand{\SO}{{\mathrm S}{\mathrm O}}
\newcommand{\SU}{{\mathrm S}{\mathrm U}}
\newcommand{\U}{{\mathrm U}}
\newcommand{\C}{{\mathbb C}}
\newcommand{\Sp}{{\mathrm S}{\mathrm p}}
\newcommand{\so}{{\mathfrak s}{\mathfrak o}}
\newcommand{\Ad}{{\mathrm A}{\mathrm d}}
\newcommand{\ad}{{\mathrm a}{\mathrm d}}
\newcommand{\m}{{\mathfrak m}}
\newcommand{\g}{{\mathfrak g}}
\newcommand{\p}{{\mathfrak p}}
\newcommand{\fk}{{\mathfrak k}}
\newcommand{\fh}{{\mathfrak h}}
\newcommand{\E}{{\mathrm E}}
\newcommand{\F}{{\mathrm F}}
\newcommand{\ffb}{{\mathfrak b}}
\newcommand{\fl}{{\mathfrak l}}
\newcommand{\G}{{\mathrm G}}
\newcommand{\R}{\mathrm{Ric}}
\newcommand{\yd}{\approx}

\numberwithin{equation}{section}

\title{Geodesic orbit metrics on compact simple Lie groups arising from generalized flag manifolds}
\author{Huibin Chen, Zhiqi Chen and Joseph A. Wolf}
\address{School of Mathematical Sciences and LPMC, Nankai University, Tianjin 300071, P.R. China}
\email{chenhuibin@mail.nankai.edu.cn}
\address{School of Mathematical Sciences and LPMC, Nankai University, Tianjin 300071, P.R. China}
\email{chenzhiqi@nankai.edu.cn}
\address{Department of Mathematics University of California Berkeley, CA 94720-3840, USA}
\email{jawolf@math.berkeley.edu}
\date{}
\maketitle

\begin{abstract}
In this paper, we investigate left-invariant geodesic orbit metrics on connected simple Lie groups, where the metrics are formed by the structures of generalized flag manifolds. We prove that all these left-invariant geodesic orbit metrics on simple Lie groups are naturally reductive.
\end{abstract}

\section{Introduction}
For a homogeneous Riemannian manifold $(M=G/H, g)$, where $H$ is a compact subgroup of $G$, $g$ is a $G$-invariant Riemannian metric on $M$. If any geodesic of $M$ is the orbit of some 1-parameter subgroup of $G$, then $M$ is called a geodesic orbit space (g.o. space) and the metric $g$ is called a geodesic orbit metric (g.o. metric). A complete Riemannian manifold $(M, g)$ is called geodesic orbit if it is a geodesic orbit space with respect to its whole connected isometry group. This terminology was introduced by O. Kowalski and L. Vanhecke in \cite{KoVa}, where they started a systematic research of geodesic orbit manifolds and gave the classification results with dimension up to 6. 

After that many mathematicians obtained the classification results with some special settings, the authors can refer to \cite{Ni3}, \cite{Ta}, \cite{ArWaGu} and the reference therein for more information.

In \cite{Ni}, Nikonorov started to investigate g.o. metrics on compact simple Lie groups $G$ with isometry group $G\times K$ where $K$ is a compact subgroup of $G$ and he obtained an equivalent algebraic condition for g.o. spaces. In \cite{ChChDe}, the three authors showed that all the g.o. metrics on compact Lie groups arising from generalized Wallach spaces are naturally reductive. 

In this paper, we investigate all the geodesic orbit metrics on compact simple Lie groups $G$ with the structure from generalized flag manifolds. By using the structure of generalized, we prove that all these g.o. metrics are naturally reductive with respect to $G\times K$.

This paper is organized as follows: in section 2, we introduce the definition and structure of generalized flag manifolds along with some basic facts on g.o. metrics on compact simple Lie groups. In section 3, we prove all these g.o. metrics are naturally reductive by using the structure of generalized flag manifolds.

\section{Geodesic orbit metrics on compact simple Lie groups and generalized flag manifolds}
We first recall some basic conceptions. Let $K$ be a closed subgroup of Lie group $G$, a $G$-invariant metric $g$ on $M=G/K$ corresponds to an $Ad(K)$-invariant scalar product $(\ ,\ )$ on $\m=T_oM$ and vice versa. The metric $g$ is called standard if the scalar product $(\ ,\ )$ on $\m$ is the restriction of $B$, where $B$ is the minus of Killing form of $\g$. For a given non-degenerate $Ad(K)$-invariant scalar product $(\ ,\ )$ on $\m$, there exist an $Ad(K)$-invariant positive definite symmetric operator $A$ on $\m$ such that $(x, y)=B(Ax, y) (x, y\in\m)$. Conversely, any such operator $A$ determines an $Ad(K)$-invariant scalar product $(x,y)= B(Ax,y)$ on $\m$. We call such $A$ a metric endomorphism. A homogeneous Riemannian metric on $M=G/K$ is called naturally reductive if 
$$([Z,X]_{\m},Y)+(X,[Z,Y]_{\m})=0, \forall X,Y,Z\in\m.$$

In \cite{AlAr}, there is an equivalent algebraic description of g.o. metrics on $M=G/K$, we recall it below:
\begin{theorem}[\cite{AlAr} Corollary 2]\label{g.o.space}
Let $(M = G/K,g)$ be a homogeneous Riemannian manifold. Then $M$ is geodesic orbit space if and only if for every $X\in\m$ there exists an $a(X) \in \fk$ such that
$$[a(X) + X, AX]\in\fk,$$
where $A$ is the metric endomorphism.
\end{theorem}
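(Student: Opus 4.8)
The plan is to pass through the notion of a \emph{geodesic vector} and reduce the geodesic orbit property to a purely algebraic identity. Recall that a nonzero vector $Z\in\g$ is a geodesic vector if the orbit $t\mapsto\exp(tZ)\cdot o$ is a geodesic of $M$ through the base point $o=eK$. By homogeneity it suffices to describe the homogeneous geodesics issuing from $o$, and the Kowalski--Szenthe criterion characterizes geodesic vectors by
$$([Z,Y]_{\m},Z_{\m})=0\qquad\text{for all }Y\in\m,$$
where $Z_{\m}$ is the $\m$-component of $Z$ in the $B$-orthogonal reductive splitting $\g=\fk\oplus\m$. The first step is to observe that $(M,g)$ is a geodesic orbit space if and only if every direction in $T_oM\cong\m$ is tangent to a homogeneous geodesic, i.e.\ for every $X\in\m$ there is $a(X)\in\fk$ making $Z=a(X)+X$ a geodesic vector; here the $\fk$-summand does not alter the initial velocity because $\fk$ is the kernel of the projection $\g\to T_oM$, so $Z_{\m}=X$ records exactly the prescribed initial direction.

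The second and main step is to rewrite the Kowalski--Szenthe identity for $Z=a(X)+X$ in terms of the metric endomorphism $A$. Substituting $Z_{\m}=X$, the condition reads $([a(X)+X,Y]_{\m},X)=0$ for all $Y\in\m$. I would then unwind the scalar product through $(\,\cdot\,,\,\cdot\,)=B(A\,\cdot\,,\,\cdot\,)$, using that $A$ is $B$-self-adjoint (a consequence of symmetry of the scalar product) and that $\m$ and $\fk$ are $B$-orthogonal, to replace $([a(X)+X,Y]_{\m},X)$ by $B([a(X)+X,Y],AX)$. The bi-invariance of $B$, i.e.\ the $\Ad$-invariance of the Killing form, then moves the bracket across to yield $-B(Y,[a(X)+X,AX])$. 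Hence the geodesic vector condition becomes $B(Y,[a(X)+X,AX])=0$ for all $Y\in\m$, which by $B$-orthogonality of the splitting is precisely $[a(X)+X,AX]\in\fk$. Running this equivalence in both directions and quantifying over all $X\in\m$ gives the claimed characterization.

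The step I expect to require the most care is the first one: justifying that the geodesic orbit property is equivalent to the \emph{existence}, for each $X\in\m$, of a compensating $a(X)\in\fk$ with $a(X)+X$ a geodesic vector. One direction is immediate, since every homogeneous geodesic arises from a one-parameter subgroup whose initial velocity projects into $\m$; the subtle direction is that prescribing the direction $X$ first and only then searching for $a(X)$ does capture \emph{all} geodesics through $o$, which relies on homogeneity to transport geodesics issuing from other points back to $o$. By contrast, the algebraic manipulation in the second step is routine once bi-invariance of $B$ and self-adjointness of $A$ are in hand; the only point to watch is careful bookkeeping of $\m$- versus $\fk$-components so that the orthogonality $B(\fk,\m)=0$ is applied correctly.
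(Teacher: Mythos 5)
Your argument is correct, and it is essentially the proof of the cited source: the paper itself imports this statement from [AlAr, Corollary~2] without proof, and the standard derivation there runs exactly as you describe --- reduce the geodesic orbit property to the existence of a geodesic vector $a(X)+X$ over each direction $X\in\m$ via the Kowalski--Vanhecke geodesic lemma (your ``Kowalski--Szenthe criterion''), then convert $([a(X)+X,Y]_{\m},X)=0$ for all $Y\in\m$ into $[a(X)+X,AX]\perp_B\m$ using the $B$-self-adjointness of $A$, the $\Ad$-invariance of $B$, and the $B$-orthogonality of $\fk$ and $\m$. The only point worth flagging is that the definiteness of $B$ (hence the availability of the $B$-orthogonal reductive splitting and of the metric endomorphism) is a standing hypothesis here, as $G$ is compact semisimple throughout the paper.
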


According to the Ochiai-Takahashi theorem \cite{OcTa}, the full connected isometry group Isom($G,g$) of a simple compact Lie group $G$ with a left-invariant Riemannian metric $g$ contains in the group $L(G)R(G)$, the product of left and right translations. Hence $G$ is a normal subgroup in Isom($G, g$), which is locally isomorphic to the group $G\times K$, where $K$ is a closed subgroup of $G$, with action $(a, b)(c) = acb^{-1}$, where $a,c \in G$ and $b \in K$.

In \cite{AlNi}, Alekseevski and Nikonorov showed that if we choose $G$ as the isometry group of the compact Lie group $G$ with a left-invariant Riemannian metric, then 
\begin{prop}[\cite{AlNi} Proposition 8]
A compact Lie group $G$ with a left-invariant metric $g$ is a g.o. space if and only if the corresponding Euclidean metric $(\ ,\ )$ on the Lie algebra $\g$ is bi-invariant.
\end{prop}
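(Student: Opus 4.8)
The plan is to read the proposition through Theorem~\ref{g.o.space} in the degenerate case where the acting group is $G$ itself. Taking $G$ acting on itself by left translations means the relevant homogeneous presentation is $M=G/\{e\}$, so that $\fk=0$ and $\m=\g$. The only admissible choice in Theorem~\ref{g.o.space} is then $a(X)=0$, and the membership $[a(X)+X,AX]\in\fk$ collapses to
$$[X,AX]=0\qquad\text{for all }X\in\g,$$
where $A$ is the metric endomorphism determined by $(X,Y)=B(AX,Y)$. Since a left-invariant metric is bi-invariant exactly when $(\ ,\ )$ is $\Ad(G)$-invariant, the whole proposition reduces to showing that the displayed identity is equivalent to $A$ commuting with every $\Ad(g)$.

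One implication is immediate. If $(\ ,\ )$ is bi-invariant then $A$ commutes with each $\ad_Z$, so $[X,AX]=A[X,X]=0$. For the converse I would first polarize: replacing $X$ by $X+Y$ in $[X,AX]=0$ and using bilinearity and antisymmetry gives
$$[X,AY]=[AX,Y]\qquad\text{for all }X,Y\in\g.$$
Because $A$ is symmetric and positive definite relative to the $\ad$-invariant form $B$, it is diagonalizable with positive eigenvalues and pairwise $B$-orthogonal eigenspaces; write $\g=\bigoplus_{\lambda}\g_\lambda$ for this spectral decomposition.

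The core of the argument is to show that this is a decomposition into ideals. Feeding eigenvectors $X\in\g_\mu$ and $Y\in\g_\lambda$ into the polarized identity yields $(\lambda-\mu)[X,Y]=0$, so $[\g_\mu,\g_\lambda]=0$ whenever $\mu\neq\lambda$. To control brackets inside a single eigenspace, take $X,Y\in\g_\lambda$ and an arbitrary $Z\in\g_\mu$ with $\mu\neq\lambda$; the $\ad$-invariance of $B$ gives $B([X,Y],Z)=B(X,[Y,Z])=0$ since $[Y,Z]=0$. As the eigenspaces are $B$-orthogonal and $B$ is non-degenerate, this forces $[X,Y]\in\g_\lambda$. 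Hence every $\g_\lambda$ is a subalgebra commuting with all the others, each $\ad_Z$ preserves every $\g_\lambda$, and on $\g_\lambda$ the operator $A$ is the scalar $\lambda$, so it commutes with $\ad_Z$ there and therefore on all of $\g$. Integrating over the connected group $G$ shows $A$ commutes with $\Ad(G)$, which is precisely the bi-invariance of $(\ ,\ )$.

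I expect the converse to be the main obstacle, and within it the step that brackets inside one eigenspace remain in that eigenspace. The tempting shortcut is to invoke simplicity of $\g$ and conclude $A=\lambda\,\mathrm{Id}$, but that only settles the simple case; the economical and correct observation is that the $\ad$-invariance of the reference form $B$ alone pins down $[\g_\lambda,\g_\lambda]\subseteq\g_\lambda$, so the eigenspace splitting is automatically a splitting into ideals. Thus the argument uses only that $B$ is a non-degenerate $\ad$-invariant inner product and that $A$ is $B$-self-adjoint and positive definite — data supplied by compactness of $G$ — and it never uses simplicity of $\g$.
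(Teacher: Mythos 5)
Your argument is correct, and note that the paper itself offers no proof of this statement: it is quoted directly from Alekseevsky--Nikonorov \cite{AlNi} (Proposition 8), so there is nothing in the text to compare against line by line. Your route --- reading the g.o.\ condition through Theorem \ref{g.o.space} with $K=\{e\}$, so that it collapses to $[X,AX]=0$ for all $X\in\g$, then polarizing to $[X,AY]=[AX,Y]$ and showing that the $A$-eigenspace decomposition of $\g$ is automatically a decomposition into mutually commuting ideals --- is the standard argument, and is essentially the proof in the cited source; each step (the vanishing of brackets between distinct eigenspaces, and the $B$-orthogonality argument forcing $[\g_\lambda,\g_\lambda]\subseteq\g_\lambda$) checks out. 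Two small caveats are worth recording. First, passing from the infinitesimal condition ($A$ commutes with every $\ad_Z$) to $\Ad(G)$-invariance of $(\ ,\ )$ requires $G$ connected; this is implicit in the setting but should be stated. Second, your closing remark that the needed properties of $B$ are ``supplied by compactness of $G$'' is not quite accurate as written: for a compact non-semisimple group (e.g.\ a torus) the Killing form is degenerate, so the metric endomorphism must be defined with respect to some fixed bi-invariant inner product --- whose existence compactness does supply, by averaging --- rather than with respect to $B$ itself. In the context of this paper $G$ is simple, so $B$ is non-degenerate and the caveat is harmless, but the proposition as stated concerns general compact Lie groups.
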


In \cite{Ni}, Nikonorov consider the isometry group of compact simple Lie group $G$ as $G\times K$, where $K$ is a closed subgroup of $G$. Then he obtained the equivalent algebraic description of g.o. metrics $g$ on compact simple Lie groups $G$:
\begin{theorem}[\cite{Ni} Proposition 10]\label{g.o.group}
A simple compact Lie group $G$ with a left-invariant Riemannian metric $g$ is a geodesic orbit manifold if and only if there is a closed connected subgroup $K$ of $G$ such that for any $X\in\g$ there is $W\in\fk$ such that for any $Y\in\g$ the equality $([X + W, Y ], X) = 0$ holds or, equivalently, $[A(X), X + W ] = 0$, where $A:\g\rightarrow\g$ is a metric endomorphism.
\end{theorem}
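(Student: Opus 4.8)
The plan is to exploit the Ochiai--Takahashi realization recalled above. Since $\mathrm{Isom}(G,g)^{0}$ sits inside $L(G)R(G)$ and is locally $G\times K$ acting by $(a,b)\cdot c=acb^{-1}$, the isotropy group of $e$ is the diagonal $\Delta K=\{(k,k):k\in K\}$, so $(G,g)$ is equivariantly isometric to the homogeneous space $(G\times K)/\Delta K$. Consequently $G$ is a g.o. manifold exactly when every geodesic issuing from $e$ is the orbit $t\mapsto \exp(t\tilde X)\exp(-tW)$ of a one-parameter subgroup $(\exp t\tilde X,\exp tW)$ of $G\times K$, with $\tilde X\in\g$ and $W\in\fk$. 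Writing $X=\tilde X-W\in\g$ for the initial velocity of that orbit, the whole assertion reduces to one question: for which pairs $(X,W)$ is $\exp\!\big(t(X+W)\big)\exp(-tW)$ a geodesic of $(G,g)$, and can one always solve for $W\in\fk$ once $X\in\g$ is prescribed?

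First I would settle the geodesic condition for such an orbit by a direct computation. Its left-logarithmic (body) velocity works out to $V(t)=\Ad(\exp tW)\,X$, so $V(0)=X$ and $\dot V(0)=[W,X]$. Feeding this into the Euler--Arnold equation $A\dot V=[AV,V]$ of the left-invariant metric $(\,\cdot\,,\cdot\,)=B(A\,\cdot\,,\cdot\,)$ gives, at $t=0$, the algebraic relation $A[W,X]=[AX,X]$. Here the decisive structural input enters: because $A$ is $\Ad(K)$-invariant it commutes with $\ad_W$ for every $W\in\fk$, whence $A[W,X]=[W,AX]$ and the relation becomes $[W,AX]=[AX,X]$, i.e. $[AX,X+W]=0$. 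The same $\Ad(K)$-invariance, applied to $V(t)=\Ad(\exp tW)V(0)$ with $\exp tW\in K$, shows that the identity at $t=0$ propagates to all $t$, so the orbit is a geodesic globally as soon as it is one infinitesimally. (Alternatively one may run Theorem \ref{g.o.space} on $(G\times K)/\Delta K$: the geodesic vectors of the form $\hat a+\xi$ with $\hat a\in\Delta\fk$ are precisely these one-parameter subgroups, and after identifying the isotropy complement with $\g$ and computing the homogeneous-space metric endomorphism $\hat A$ in terms of $A$, the condition $[\hat a+\xi,\hat A\xi]\in\Delta\fk$ unwinds to the same bracket relation.)

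Assembling, $G$ is a geodesic orbit manifold with respect to $G\times K$ if and only if for every $X\in\g$ there is $W\in\fk$ with $[AX,X+W]=0$, which is the asserted criterion. The equivalence with the first form is immediate from the symmetry of $A$ with respect to $B$ together with the $\ad$-invariance and non-degeneracy of $B$:
\[
([X+W,Y],X)=B(A[X+W,Y],X)=B([X+W,Y],AX)=B(Y,[AX,X+W]),
\]
so $([X+W,Y],X)=0$ for all $Y\in\g$ holds precisely when $[AX,X+W]=0$. The main obstacle I anticipate is the careful bookkeeping of the isotropy directions: one must check both that \emph{every} geodesic actually arises from some one-parameter subgroup of $G\times K$ (not merely that those orbits which happen to be geodesics satisfy the relation) and that the $t=0$ reduction is genuinely equivalent to the geodesic condition for all $t$. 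In both places the $\Ad(K)$-invariance of the metric endomorphism $A$, i.e. that $A$ commutes with $\ad_W$ for all $W\in\fk$, is exactly what collapses the $\fk$-contributions and delivers the clean bracket identity.
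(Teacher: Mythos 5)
The paper itself offers no proof of this statement: it is imported verbatim from \cite{Ni} (Proposition 10), so there is no in-paper argument to compare yours against. Judged on its own merits, your argument is correct and is essentially the standard proof. The identification of $(G,g)$ with $(G\times K)/\Delta K$ via Ochiai--Takahashi, the parametrization of one-parameter orbits through $e$ as $t\mapsto\exp(t(X+W))\exp(-tW)$, the computation $V(t)=\Ad(\exp tW)X$ of the body velocity, the evaluation of the Euler--Arnold equation $A\dot V=[AV,V]$ at $t=0$ together with its propagation to all $t$ by $\Ad(\exp tW)$-equivariance, and the passage between the two forms of the condition via the $B$-symmetry of $A$ and the $\ad$-invariance of $B$ all check out. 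Your route through the Euler--Arnold equation is equivalent to, though phrased differently from, the route in \cite{Ni}, which instead runs the geodesic-vector criterion of Theorem \ref{g.o.space} on the homogeneous space $(G\times K)/\Delta K$; that is precisely the alternative you sketch parenthetically, and the two collapse to the same bracket identity.

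One hypothesis you use silently should be recorded. In the ``if'' direction $K$ is merely posited, and your key step $A[W,X]=[W,AX]$ requires the metric endomorphism $A$ to be $\Ad(K)$-invariant, equivalently that $R(K)$ act by isometries; without this, the orbits of one-parameter subgroups of $G\times K$ have no a priori relation to geodesics of $g$. This invariance is automatic in the ``only if'' direction, where $K$ is produced from the isometry group, and it holds for the metrics of the form (\ref{metric}) to which the theorem is applied in this paper, but the statement as quoted does not assert it, so your converse argument needs it stated explicitly. With that caveat the proof is complete.
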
 

Let $B$ denote the minus of Killing form of $\g$, the Lie algebra of $G$. 
\begin{equation}\label{metric}
(\ ,\ )=A_0B(\ ,\ )|_{\fk_0}+x_1B(\ ,\ )|_{\fk_1}+\cdots+x_pB(\ ,\ )|_{\fk_p}+y_1B(\ ,\ )|_{\m_1}+\cdots+y_qB(\ ,\ )|_{\m_q},
\end{equation}
where $\fk$ is the Lie algebra of $K$ and $\fk=\fk_0\oplus\fk_1\oplus\cdots\fk_p$ is the decomposition of $\fk$ into non-isomorphic simple ideals and center, $\m$ is the $B$-orthogonal component of $\fk$ and $\m=\m_1\oplus\cdots\oplus\m_q$ is the decomposition of $\m$ into $Ad(K)$-irreducible and non-equivalent modules. We denote $A$ as $diag\{x_1,\cdots,x_p,y_1,\cdots,y_q\}$, $A$ is clearly the metric endomorphism.

D' Atri and Ziller \cite{DAZi} have investigated naturally reductive metrics among the left-invariant metrics on compact Lie groups, and have given a complete classification in the case of simple Lie groups. The following is a description of naturally reductive left-invariant metrics on a compact simple Lie group:
\begin{theorem}[\cite{DAZi} Theorem 1, Theorem 3]\label{nr}
Under the notations above, a left-invariant metric on $G$ of the form
\begin{equation}\label{form}
(\ ,\ )=xB|_{\m}+A_0|_{\fk_0}+u_1B|_{\fk_1}+\cdots+u_p·B|_{\fk_p}, (x,u_1,\cdots,u_p\in\mathbb{R}^+) 
\end{equation}
is naturally reductive with respect to $G\times K$, where $G\times K$ acts on $G$ by $(g, k)y = gyk^{-1}$, where $A_0$ is an arbitrary metric on $\fk_0$. Conversely, if a left-invariant metric $(\ ,\ )$ on a compact simple Lie group $G$ is naturally reductive, then there exists a closed subgroup $K$ of $G$ such that the metric $(\ ,\ )$ is given by the form (\ref{form}).
\end{theorem}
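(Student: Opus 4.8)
The plan is to prove both implications by realizing the left-invariant metric on $G$ as a metric on the homogeneous space $(G\times K)/\Delta K$, where $\Delta K=\{(k,k):k\in K\}$ is the isotropy of the action $(a,b)\cdot c=acb^{-1}$ at $e\in G$. Writing $\widetilde{\g}=\g\oplus\fk$ for the Lie algebra of $G\times K$, the differential at $e$ of the orbit map sends $(Y,Z)\in\widetilde{\g}$ to $Y-Z\in\g=T_eG$, so $\Delta\fk=\{(W,W):W\in\fk\}$ is exactly its kernel. The structural fact I would rely on is the standard principle: if $\m$ is the $Q$-orthogonal complement of the isotropy subalgebra $\fh$ for an $\Ad$-invariant symmetric bilinear form $Q$ on the ambient algebra, then the induced metric is naturally reductive, since for $X,Y,Z\in\m$ one has $Q([X,Y]_\m,Z)+Q(Y,[X,Z]_\m)=Q([X,Y],Z)+Q(Y,[X,Z])=0$ by $\Ad$-invariance of $Q$ (the $\fh$-components drop out because $\m\perp_Q\fh$).

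For the forward implication I would build such a $Q$ on $\widetilde{\g}$ adapted to the data $x,u_1,\dots,u_p,A_0$. Since $\g$ is simple, any $\Ad$-invariant form on the $\g$-factor is a multiple of $B$, so I take $Q=xB|_{\g}\oplus S$, where $S$ is an $\Ad(K)$-invariant form on the $\fk$-factor, equal to a scalar $s_i$ times $B$ on each simple ideal $\fk_i$ and an arbitrary symmetric form $S_0$ on the center $\fk_0$. Setting $\mathfrak{n}=(\Delta\fk)^{\perp_Q}$ and computing the orbit-map projection $(Y,Z)\mapsto Y-Z$, a direct calculation shows the induced metric on $\g$ is $xB$ on $\m$, is $\tfrac{xs_i}{x+s_i}B$ on $\fk_i$, and is a form on $\fk_0$ determined by $S_0$. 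Choosing $s_i=\tfrac{xu_i}{x-u_i}$ (and letting $s_i\to\infty$, i.e.\ decoupling that factor, in the exceptional case $u_i=x$), and choosing $S_0$ so that the induced form on $\fk_0$ equals $A_0$, makes this coincide with \eqref{form}; the general principle then yields natural reductivity. One checks in passing that $Q$ is nondegenerate on $\widetilde{\g}$ and on $\Delta\fk$, so $\mathfrak{n}$ is a genuine reductive complement, and note that $Q$ need not be positive definite on $\widetilde{\g}$, only on $\mathfrak{n}$.

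For the converse I would start from an arbitrary naturally reductive left-invariant metric. By the Ochiai--Takahashi theorem its isometry group lies in $L(G)R(G)$, so the naturally reductive presentation has the form $(G\times K)/\Delta K$ with reductive complement $\m$. Invoking Kostant's theorem on the transvection algebra, the inner product on $\m$ extends to an $\Ad$-invariant symmetric bilinear form $Q$ on $\widetilde{\g}=\g\oplus\fk$ (passing, if necessary, to the subalgebra generated by $\m$). The crucial step is that $\g$ and $\fk$ are ideals of $\widetilde{\g}$ with $[\g,\fk]=0$ and $\g$ semisimple, so $\Ad$-invariance forces $Q(\g,\fk)=0$; hence $Q=Q_\g\oplus Q_\fk$ with $Q_\g=xB$ for some $x>0$ and $Q_\fk$ a bi-invariant form on $\fk$. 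Running the forward computation in reverse then expresses the metric in the form \eqref{form}, with $K$ the subgroup just produced.

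The main obstacle I anticipate is the converse, and specifically the correct use of Kostant's extension theorem: one must identify the effective transvection group and verify that the relevant ambient algebra is generated by $\m$, so that the metric genuinely extends to an $\Ad$-invariant $Q$; the block-diagonalization of $Q$ and the reverse matching are then essentially formal. In the forward direction the only delicate point is the bookkeeping for the center $\fk_0$ and for the degenerate values $u_i=x$, which require the limiting/decoupling argument noted above rather than a finite choice of $s_i$.
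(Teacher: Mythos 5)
This theorem is imported verbatim from D'Atri--Ziller (\cite{DAZi}) and the paper gives no proof of it, so there is nothing internal to compare against; your proposal correctly reconstructs the argument of the cited source. The forward computation (realizing $G$ as $(G\times K)/\Delta K$, taking the $Q$-orthogonal complement of $\Delta\fk$ for a possibly indefinite $\Ad$-invariant $Q=xB\oplus S$, and solving the harmonic-mean relation $u_i=xs_i/(x+s_i)$) and the converse via Ochiai--Takahashi together with Kostant's extension theorem are exactly the standard route, and the points you flag as delicate --- the degenerate values $u_i=x$ and the identification of the transvection algebra in the converse --- are precisely where \cite{DAZi} spend their effort.
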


We have the following corollary:
\begin{coro}\label{ns}
Let $g$ of the form (\ref{metric}) be a non-naturally reductive g.o. metric on compact Lie group $G$ and let $\tilde{g}$ be the restriction of $g$ on $\m$, denote the corresponding metric endomorphism by $A$ and $\tilde{A}$, respectively. Then $(M=G/K,\tilde{g})$ is a g.o. metric on $M$ not homothetic to the standard metric.
\end{coro}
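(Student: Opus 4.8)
The plan is to prove the two claims in turn, namely that $(M=G/K,\tilde g)$ is a geodesic orbit space and that $\tilde g$ is not homothetic to the standard metric $B|_{\m}$, and the whole argument rests on comparing the three algebraic criteria already recorded. The key preliminary observation is that the metric endomorphism $A=\mathrm{diag}\{A_0,x_1,\dots,x_p,y_1,\dots,y_q\}$ of $g$ preserves the $B$-orthogonal splitting $\g=\fk\oplus\m$, and that it acts on $\m=\m_1\oplus\cdots\oplus\m_q$ as $\mathrm{diag}\{y_1,\dots,y_q\}$, which is precisely $\tilde A$. Thus $A(X)=\tilde A(X)$ for every $X\in\m$, literally and not merely after $B$-orthogonal projection onto $\m$.

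With this identification in hand, I would verify the condition of Theorem \ref{g.o.space} for $\tilde g$. Fix $X\in\m\subseteq\g$. Since $g$ is a g.o. metric on $G$, Theorem \ref{g.o.group} provides $W\in\fk$ with $[A(X),X+W]=0$; using $A(X)=\tilde A(X)$ this becomes $[\tilde A X,X+W]=0$, equivalently $[X+W,\tilde A X]=0\in\fk$. Taking $a(X)=W$, the hypothesis of Theorem \ref{g.o.space} is met for every $X\in\m$, so $(M,\tilde g)$ is a geodesic orbit space. I expect this step to be essentially immediate; the only care required is the invariance $A(\m)\subseteq\m$ that makes the substitution $A(X)=\tilde A(X)$ exact.

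For the second claim I would argue by contradiction. If $\tilde g$ were homothetic to $B|_{\m}$, then the coefficients on $\m$ would all coincide, $y_1=\cdots=y_q=x$ for some $x>0$, and feeding this back into (\ref{metric}) rewrites $g$ exactly in the form $xB|_{\m}+A_0 B|_{\fk_0}+x_1B|_{\fk_1}+\cdots+x_pB|_{\fk_p}$ of equation (\ref{form}), with the admissible arbitrary $\fk_0$-metric taken to be $A_0B|_{\fk_0}$. By the D'Atri--Ziller classification (Theorem \ref{nr}) such a metric is naturally reductive with respect to $G\times K$, contradicting the hypothesis that $g$ is non-naturally reductive. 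Hence the $y_i$ are not all equal and $\tilde g$ is not homothetic to the standard metric. The only real bookkeeping obstacle is matching the block-diagonal descriptions of $g$, $A$, $\tilde g$, $\tilde A$ consistently across the three criteria and checking that the $\fk_0$-block is allowed in (\ref{form}); once that is in place both assertions follow directly.
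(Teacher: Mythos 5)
Your proof is correct and follows essentially the same route as the paper: both use the identity $A(X)=\tilde A(X)$ for $X\in\m$ to transfer the condition of Theorem \ref{g.o.group} into the hypothesis of Theorem \ref{g.o.space}, and both deduce the non-homothety claim from Theorem \ref{nr} via the observation that equal coefficients on $\m$ would force $g$ into the naturally reductive form (\ref{form}). Your write-up merely makes explicit the contrapositive step that the paper states in one line.
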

\begin{proof}
Since $g$ is a g.o. metric on $G$, then by Theorem \ref{g.o.group}, we have for any $X\in\m$, there exists $W\in\fk$ such that
$$[W+X,A(X)]=[W+X,\tilde{A}(X)]=0\in\fk,$$
by Theorem \ref{g.o.space}, $(M=G/K,\tilde{g})$ is a g.o. space. From Theorem \ref{nr}, we know $\tilde{g}$ is not homothetic to the standard metric because $g$ is non-naturally reductive.
\end{proof}

Next, we will introduce some basic conceptions on generalized flag manifold.
\begin{defi}[\cite{Ar}]
A generalized flag manifold is a homogeneous space of the form $G/K=G/C(S)$, where $G$ is a compact Lie group and $S$ is a torus in $G$. If the torus $S$ is a maximal torus in $G$, say $T$, then $G/T$ is called a flag manifold.
\end{defi}

Let $G/K = G/C(S)$ be a generalized flag manifold, where $G$ is a compact semisimple Lie group and $S$ is a torus in $G$, here $C(S)$ denotes the centralizer of $S$ in $G$. Let $\g$ and $\fk$ be the Lie algebras of the Lie groups $G$ and $K$ respectively, and $\g^\mathbb{C}$ and $\fk^\mathbb{C}$ be the complexifications of $\g$ and $\fk$ respectively. Let $\g = \fk\oplus\m$ be a reductive decomposition with respect to $B$ with $[\fk,\m]\subset\m$. Let $T$ be a maximal torus containing $S$. Then this is a maximal torus in $K$. Let $\mathfrak{h}$ be the Lie algebra of $T$ and $\mathfrak{h}^\mathbb{C}$ its complex. Then $\mathfrak{h}^\mathbb{C}$ is a Cartan subalgebra of $\g^\mathbb{C}$. Let $R$ be a root system $\g^\mathbb{C}$ with respect to $\mathfrak{h}^\mathbb{C}$ and $\g^\C = \mathfrak{h}^\C \oplus  \sum_{\alpha\in R}\g_{\alpha}^\C$ be the root space decomposition.

Obviously, $\fk^\C$ contains $\mathfrak{h}^\C$, so there exist a subset $R_K$ of $R$ such that $\fk^\C=\mathfrak{h}^\C+\sum_{\alpha\in R_K}\g_{\alpha}^\C$. We can choose $\Pi$ and $\Pi_K$ to be simple roots of $R$ and $R_K$ respectively such that $\Pi_K\subset\Pi$. Let $R_M=R\setminus R_K$, then we have $\m^\C=\sum_{\alpha\in R_M}\g_\alpha^\C$ and
$$\g^\C=\mathfrak{h}^\C\oplus\sum_{\alpha\in R_K}\g_{\alpha}^\C\oplus\sum_{\alpha\in R_M}\g_{\alpha}^\C.$$

We choose a Weyl basis $\{H_{\alpha}, E_{\alpha}|\alpha\in R\}$ in $\g^\C$ with $B(E_\alpha,E_{-\alpha})=1$, $[E_\alpha, E_{-\alpha}]=H_\alpha$ and 
$$ [E_\alpha, E_\beta]=\left\{
\begin{aligned}
&0  &\mbox{if $\alpha+\beta\notin R$ and $\alpha+\beta\neq0$} \\
N_{\alpha,\beta}&E_{\alpha+\beta}   &\mbox{if $\alpha+\beta\in R,$}
\end{aligned}
\right.
$$
where $N_{\alpha,\beta}(\neq0)$ is the structure constant with $N_{\alpha,\beta}=-N_{-\alpha,-\beta}$ and $N_{\alpha,\beta}=-N_{\beta,\alpha}.$ The following is a compact real form of $\g^\C$:
$$\g_\mu=\sum_{\alpha\in R^+}\mathbb{R}\sqrt{-1}H_\alpha\oplus\sum_{\alpha\in R^+}(\mathbb{R}A_{\alpha}+\mathbb{R}B_\alpha),$$
where $R^+$ is the positive root system of $\g$ and $A_\alpha=E_\alpha-E_{-\alpha}, B_\alpha=\sqrt{-1}(E_\alpha+E_{-\alpha}).$ Since any two compact real forms of $\g^\C$ are conjugated, we can identify $\g$ with $\g_\mu$. If we set $R_M^+=R^+\setminus R_K^+$, then we have 
$$\fk=\sum_{\alpha\in R^+}\mathbb{R}\sqrt{-1}H_\alpha\oplus\sum_{\alpha\in R_K^+}(\mathbb{R}A_\alpha+\mathbb{R}B_{\alpha}),$$
$$\m=\sum_{\alpha\in R_M^+}(\mathbb{R}A_\alpha+\mathbb{R}B_{\alpha}).$$

The next lemma shows the bracket computation of $\g$ which we will make much use of in the proof of our main theorem.
\begin{lemma}\label{structure}
The Lie bracket among the elements of $\{A_\alpha=E_\alpha-E_{-\alpha}, B_\alpha=\sqrt{-1}(E_\alpha+E_{-\alpha}), \sqrt{-1}H_\beta\in R^+, \beta\in\Pi\}$ of $\g$ are given by
$$[ \sqrt{-1}H_\alpha,A_\beta] = \beta(H_\alpha)B_\beta,\quad [A_\alpha, A_\beta] = N_{\alpha,\beta}A_{\alpha+\beta} + N_{-\alpha,\beta}A_{\alpha-\beta}(\alpha\neq\beta),$$
$$[\sqrt{-1}H_\alpha,B_\beta]=-\beta(H_\alpha)A_\beta, [B_\alpha,B_\beta]=-N_{\alpha,\beta}A_{\alpha+\beta} -N_{\alpha,-\beta}A_{\alpha-\beta}(\alpha\neq\beta),$$
$$[A_\alpha,B_\alpha] = 2 \sqrt{-1}H_\alpha, [A_\alpha,B_\beta] = N_{\alpha,\beta}B_{\alpha+\beta} + N_{\alpha,-\beta}B_{\alpha-\beta}(\alpha\neq\beta),$$
where $N_{\alpha,\beta}$ are the structural constants in Weyl basis.
\end{lemma}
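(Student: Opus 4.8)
The plan is to prove all six identities by direct expansion in the Weyl basis, so that the whole argument reduces to a bookkeeping computation governed by the three structural facts $[H_\alpha,E_\beta]=\beta(H_\alpha)E_\beta$, $[E_\alpha,E_{-\alpha}]=H_\alpha$, and $[E_\alpha,E_\beta]=N_{\alpha,\beta}E_{\alpha+\beta}$ (with the convention that the right-hand side is $0$ when $\alpha+\beta\notin R$ and $\alpha+\beta\neq0$), together with the antisymmetries $N_{\alpha,\beta}=-N_{-\alpha,-\beta}$ and $N_{\alpha,\beta}=-N_{\beta,\alpha}$. First I would substitute $A_\alpha=E_\alpha-E_{-\alpha}$ and $B_\alpha=\sqrt{-1}(E_\alpha+E_{-\alpha})$ into each bracket and expand by bilinearity into the resulting four terms.

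For the two identities involving $\sqrt{-1}H_\alpha$ the computation is immediate: applying $[H_\alpha,E_{\pm\beta}]=\pm\beta(H_\alpha)E_{\pm\beta}$ to $[\sqrt{-1}H_\alpha,A_\beta]$ gives $\sqrt{-1}\,\beta(H_\alpha)(E_\beta+E_{-\beta})=\beta(H_\alpha)B_\beta$, and the analogous expansion for $[\sqrt{-1}H_\alpha,B_\beta]$ yields $-\beta(H_\alpha)A_\beta$. The diagonal identity $[A_\alpha,B_\alpha]=2\sqrt{-1}H_\alpha$ follows from $[E_\alpha,E_{-\alpha}]=H_\alpha$ and $[E_{-\alpha},E_\alpha]=-H_\alpha$ once the two self-brackets vanish and the cross terms add.

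The heart of the proof lies in the three off-diagonal identities for $[A_\alpha,A_\beta]$, $[B_\alpha,B_\beta]$, and $[A_\alpha,B_\beta]$. For each of these, the expansion produces four terms of the shape $N_{\pm\alpha,\pm\beta}E_{\pm\alpha\pm\beta}$, and I would then pair the term carrying $E_{\alpha+\beta}$ with the one carrying $E_{-\alpha-\beta}$, and the term carrying $E_{\alpha-\beta}$ with the one carrying $E_{-\alpha+\beta}$. The decisive step is to rewrite the structure constants via $N_{-\alpha,-\beta}=-N_{\alpha,\beta}$ and $N_{-\alpha,\beta}=-N_{\alpha,-\beta}$ so that each pair assembles into a single $A_{\alpha\pm\beta}$ or $B_{\alpha\pm\beta}$. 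For instance, in $[A_\alpha,B_\beta]$ the coefficient of $\sqrt{-1}E_{\alpha+\beta}$ is $N_{\alpha,\beta}$ while that of $\sqrt{-1}E_{-\alpha-\beta}$ is $-N_{-\alpha,-\beta}=N_{\alpha,\beta}$, so the pair recombines exactly into $N_{\alpha,\beta}B_{\alpha+\beta}$, and likewise the remaining pair gives $N_{\alpha,-\beta}B_{\alpha-\beta}$.

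The main obstacle, such as it is, is purely sign bookkeeping: one must track the factors of $\sqrt{-1}$ coming from the definition of $B_\alpha$, apply the two antisymmetry relations in the correct places so that opposite root vectors recombine with matching coefficients, and consistently read $N_{\gamma,\delta}$ as zero whenever $\gamma+\delta$ fails to be a root (so that, for example, the $A_{\alpha-\beta}$ term simply drops out when $\alpha-\beta\notin R$). No conceptual difficulty arises beyond this careful case handling, and the hypothesis $\alpha\neq\beta$ in the off-diagonal formulas is precisely what excludes the degenerate case $\alpha-\beta=0$, which is instead covered by the diagonal identity $[A_\alpha,B_\alpha]=2\sqrt{-1}H_\alpha$.
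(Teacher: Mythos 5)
Your proposal is correct: the six identities do follow exactly as you describe by expanding $A_\alpha=E_\alpha-E_{-\alpha}$ and $B_\alpha=\sqrt{-1}(E_\alpha+E_{-\alpha})$ in the Weyl basis and recombining opposite root vectors via $N_{-\alpha,-\beta}=-N_{\alpha,\beta}$ and $N_{-\alpha,\beta}=-N_{\alpha,-\beta}$, with $N_{\gamma,\delta}=0$ when $\gamma+\delta\notin R$. The paper states this lemma without proof as a standard structural fact, so your direct verification is precisely the intended (and essentially the only) argument.
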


In generalized flag manifolds, the so-called $\frak{t}$-roots play an very important role which we will introduce in the following.

From now on we fix a system of simple roots $\Pi= \{\alpha_1,\cdots,\alpha_r,\phi_1,\cdots,\phi_k\}$ of $R$, so that $\Pi_K =\{\phi_1,\cdots,\phi_k\}$ is a basis of the root system $R_K$ and $\Pi_M =\Pi\setminus\Pi_K = \{\alpha_1,\cdots,\alpha_r\} (r + k = l)$. Let $\{h_{\alpha_1},\cdots,h_{\alpha_r},h_{\phi_1},\cdots,h_{\phi_k}\}$ be the fundamental weights. Let 
$$\frak{t}=\frak{z}(\fk^\C)\cap\sqrt{-1}\frak{h},$$
where $\frak{z}(\fk^\C)$ is the center of $\fk^\C$. Consider the restriction map $\pi:(\frak{h}^\C)^*\rightarrow \frak{t}^*$ defined by $\pi(\alpha) = \alpha |_{\frak{t}}$, and set $R_{\frak{t}} = \pi(R) = \pi(R_M)$. $\frak{t}$-roots are the elements of $R_{\frak t}$. For an invariant ordering $R_M^+ =R^+\setminus R_K^+$ in $R_M$, we set $R_{\frak t}^+ =\pi(R_M^+)$ and $R_{\frak t}^- =-R_{\frak t}^+$. It is obvious that $R_{\frak t}^-= \pi(R_M^- )$, thus the splitting $R_{\frak t} = R_{\frak t}^-\cup R_{\frak t}^+$ defines an ordering in $R_{\frak t}$. A $\frak t$-root $\xi\in R_{\frak t}^+$ (respectively $\xi\in R_{\frak t}^-$) will be called positive (respectively negative). A $\frak t$-root is called simple if it is not a sum of two positive $\frak t$-roots.

\begin{theorem}[\cite{AlPe} Corollary 3.1]
There is one-to-one correspondence between $\frak t$-roots and complex irreducible $ad(\fk^\C)$-submodules $\m_\xi$ of $\m^\C$. This correspondence is given by
$$R_{\frak t}\ni\xi\leftrightarrow \m_\xi=\sum_{\alpha\in R_M, \pi(alpha)=\xi}\C E_\alpha,$$
Hence $\m^\C=\sum_{\xi\in R_{\frak t}}\m_\xi$. Moreover, these submodules are non-equivalent $ad(\fk^\C)$-modules.
\end{theorem}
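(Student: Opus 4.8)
We must establish four things: the decomposition $\m^\C=\sum_{\xi\in R_{\mathfrak t}}\m_\xi$, the $\ad(\fk^\C)$-invariance of each $\m_\xi$, the irreducibility of each $\m_\xi$, and the pairwise non-equivalence. The plan is to reduce everything to one structural fact about restriction of roots. Since $\fk^\C$ is reductive with Cartan subalgebra $\mathfrak h^\C$, its center $\mathfrak z(\fk^\C)$ is precisely the common kernel in $\mathfrak h^\C$ of the roots in $R_K$; hence $\mathfrak t=\mathfrak z(\fk^\C)\cap\sqrt{-1}\mathfrak h$ is exactly the subspace on which every $\psi\in R_K$ vanishes, and under the identification of $(\mathfrak h^\C)^*$ with $\mathfrak h^\C$ by $B$ it corresponds to the $B$-orthocomplement of $\mathrm{span}(\Pi_K)$. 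The first consequence I would record is: for $\alpha,\beta\in R_M$ one has $\pi(\alpha)=\pi(\beta)$ if and only if $\alpha-\beta\in\mathbb Z\Pi_K$, i.e. $\alpha$ and $\beta$ carry the same coefficients on $\Pi_M$.

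Granting this, the decomposition and the invariance are immediate. Grouping the root spaces of $\m^\C=\sum_{\alpha\in R_M}\C E_\alpha$ according to the common value $\xi=\pi(\alpha)$ yields $\m^\C=\sum_{\xi\in R_{\mathfrak t}}\m_\xi$. For invariance, $\mathfrak h^\C$ preserves each line $\C E_\alpha$, while for $\psi\in R_K$ the Weyl basis relation gives $[E_\psi,E_\alpha]=N_{\psi,\alpha}E_{\alpha+\psi}$ (or $0$), and $\pi(\alpha+\psi)=\pi(\alpha)=\xi$ because $\psi$ vanishes on $\mathfrak t$. Thus $\ad(E_\psi)\m_\xi\subseteq\m_\xi$, and each $\m_\xi$ is an $\ad(\fk^\C)$-submodule.

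The heart of the matter, and the step I expect to be the main obstacle, is irreducibility. Since any $\ad(\fk^\C)$-submodule is in particular $\ad(\mathfrak h^\C)$-stable, it is a sum of root lines, so it suffices to prove that the set $R_\xi:=\{\alpha\in R_M:\pi(\alpha)=\xi\}$ is connected under the relation $\alpha\sim\alpha'$ whenever $\alpha-\alpha'\in R_K$: in that case one passes from $E_\alpha$ to a nonzero multiple of $E_{\alpha'}$ by the single bracket $\ad(E_{\alpha'-\alpha})$. To prove connectedness I would argue by a root-string reduction. Given $\alpha\neq\beta$ in $R_\xi$, put $\delta=\alpha-\beta\in\mathbb Z\Pi_K\setminus\{0\}$; with respect to the inner product induced by $B$ one has $\langle\delta,\delta\rangle>0$, and writing $\delta=\sum_j n_j\phi_j$ with $\phi_j\in\Pi_K$ gives $\langle\delta,\delta\rangle=\sum_j n_j\langle\delta,\phi_j\rangle$, so after possibly interchanging $\alpha$ and $\beta$ there is a simple root $\phi\in\Pi_K$ with $\langle\delta,\phi^\vee\rangle\geq1$. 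Then $\langle\alpha,\phi^\vee\rangle\geq1$ or $\langle\beta,\phi^\vee\rangle\leq-1$, so that either $\alpha-\phi$ or $\beta+\phi$ is a root, necessarily lying again in $R_\xi$; replacing $\alpha$ by $\alpha-\phi$ (respectively $\beta$ by $\beta+\phi$) is a single $\sim$-step that moves toward the target. Iterating reaches $\delta=0$ and connects $\alpha$ to $\beta$. The delicate point to handle carefully is termination in the equal-length case $\langle\delta,\phi^\vee\rangle=1$, where $\langle\delta,\delta\rangle$ need not strictly drop; this is controlled by the finiteness of root strings exactly as in the standard theory of root systems, and it constitutes the technical core of the argument.

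Finally, non-equivalence and bijectivity follow from central characters. The subspace $\mathfrak t\subseteq\mathfrak z(\fk^\C)$ acts on $\m_\xi$ by the scalar $\xi$, since $[\sqrt{-1}H,E_\alpha]$ is the corresponding multiple of $E_\alpha$ with $\alpha|_{\mathfrak t}=\xi$; hence the central character of the $\ad(\fk^\C)$-module $\m_\xi$ is $\xi$ itself. Distinct $\mathfrak t$-roots therefore produce $\ad(\fk^\C)$-modules with distinct central characters, so an isomorphism $\m_\xi\cong\m_{\xi'}$ forces $\xi=\xi'$. This simultaneously shows that the $\m_\xi$ are pairwise non-equivalent and that $\xi\mapsto\m_\xi$ is injective; combined with the decomposition and irreducibility of the previous steps, it exhibits $\xi\mapsto\m_\xi$ as a bijection between $R_{\mathfrak t}$ and the set of irreducible $\ad(\fk^\C)$-summands of $\m^\C$, completing the proof.
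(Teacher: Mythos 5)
The paper does not actually prove this statement: it is imported verbatim from \cite{AlPe} (Corollary 3.1) and used as a black box, so there is no internal proof to compare yours against. Judged on its own terms, your argument is a correct and essentially complete self-contained proof along the standard lines. The reduction of everything to the fact that $\pi(\alpha)=\pi(\beta)$ iff $\alpha-\beta\in\mathbb{Z}\Pi_K$ is right; the invariance and central-character steps are fine, though you should add the one-line observation that $0\notin R_{\mathfrak{t}}$ (equivalently $R\cap\mathrm{span}(\Pi_K)=R_K$), which is what guarantees that $\alpha+\psi$ stays in $R_M$ rather than falling into $R_K$ in the invariance step; and the connectedness of $R_\xi$ under $\alpha\sim\alpha'$ for $\alpha-\alpha'\in R_K$ is indeed the heart of the matter. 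The one point you flag as delicate --- termination when $\langle\delta,\phi^\vee\rangle=1$ --- does not in fact require any root-string analysis: the correct monovariant is not $\langle\delta,\delta\rangle$ but the $\Pi_K$-height $\sum_j|n_j|$ of $\delta=\sum_j n_j\phi_j$. Your selection rule already forces the chosen $\phi=\phi_j$ to have $n_j>0$ (after the permitted interchange of $\alpha$ and $\beta$), and the replacement $\delta\mapsto\delta-\phi_j$ changes $n_j$ to $n_j-1\geq 0$, so $\sum_j|n_j|$ drops by exactly one at every step and the induction terminates at $\delta=0$. With that substitution the argument closes completely.
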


Since the complex conjugation $\tau:\g^\C\rightarrow \g^\C$ with respect to the compact real form $\g$ interchanges the root spaces, a decomposition of the real $ad(\fk)$-module $\m=(\m^\C)^\tau$ into real irreducible $ad(\fk)$-submodule is given by
\begin{equation}\label{m}
\m=\sum_{\xi\in R_{\frak t}^+} (\m_\xi\oplus\m_{-\xi})^\tau,
\end{equation}
where $V^\tau$ denotes the set of fixed points of the complex conjugation $\tau$ in a vector subspace $V\subset \g^\C$. If we set $R_{\frak t}^+ =\{\xi_1,\cdots,\xi_s\}$, then according to (\ref{m}) each real irreducible $ad(\fk)$-submodule $\m_i = (\m_{\xi_i} \oplus \m_{-\xi_i} )^\tau (1\leq i\leq s)$ corresponding to the positive $\frak t$-roots $\xi_i$, is given by
$$\m_i=\sum_{\alpha\in R_M^+, \pi(\alpha)=\xi_i}(\mathbb{R}A_\alpha+\mathbb{R}B_\alpha).$$

\section{Main theorem and its proof}
In this section, we will claim our main theorem and prove it.
\begin{theorem}\label{maintheorem}
All the g.o. metrics on compact simple Lie groups $G$ of the form (\ref{metric}) arising from generalized flag manifold are naturally reductive.
\end{theorem}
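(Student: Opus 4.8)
The plan is to reduce the statement, via Theorem \ref{nr}, to a single numerical claim about the coefficients in (\ref{metric}). Indeed, a metric of the form (\ref{metric}) is of the naturally reductive form (\ref{form}) precisely when its restriction to $\m$ is a single multiple of $B$, i.e. when $y_1=\cdots=y_q$ (the coefficients $A_0,x_1,\dots,x_p$ on $\fk$ are left completely free by (\ref{form})). Hence it suffices to prove: \emph{if $g$ of the form (\ref{metric}) is a g.o. metric, then $y_1=\cdots=y_q$.} By Theorem \ref{g.o.group} this means that for every $X\in\g$ there is $W\in\fk$ with $[A(X),X+W]=0$; I will only need this for $X\in\m$, which is exactly the information extracted in Corollary \ref{ns}.

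The heart of the argument is a projection computation. Fix two distinct positive $\mathfrak{t}$-roots $\xi_i\neq\xi_j$ and suppose first that $\xi_i+\xi_j\in R_{\mathfrak{t}}^+$. By the $\mathfrak{t}$-root grading of $\m^\C$ (the Alekseevsky--Perelomov correspondence) one has $[\m_{\xi_i}^\C,\m_{\xi_j}^\C]=\m_{\xi_i+\xi_j}^\C\neq 0$, so there exist $\alpha,\beta\in R_M^+$ with $\pi(\alpha)=\xi_i$, $\pi(\beta)=\xi_j$ and $\alpha+\beta\in R_M$. Put $X=A_\alpha+A_\beta\in\m$, so $A(X)=y_iA_\alpha+y_jA_\beta$ and, by Lemma \ref{structure}, $[A(X),X]=(y_i-y_j)[A_\alpha,A_\beta]=(y_i-y_j)\bigl(N_{\alpha,\beta}A_{\alpha+\beta}+N_{-\alpha,\beta}A_{\alpha-\beta}\bigr)$. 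The decisive structural point is that each real module $\m_i$ is $\ad(\fk)$-invariant (this is how the decomposition (\ref{m}) is defined), so for any $W\in\fk$ we have $[A_\alpha,W]\in\m_i$ and $[A_\beta,W]\in\m_j$, whence $[A(X),W]\in\m_i\oplus\m_j$. Now let $\m_l$ be the real module attached to $\xi_i+\xi_j$; since $\xi_i+\xi_j$ is a positive $\mathfrak{t}$-root different from $\xi_i$ and $\xi_j$, we have $\m_l\neq\m_i,\m_j$, while $A_{\alpha-\beta}\notin\m_l$ because $\pi(\alpha-\beta)=\xi_i-\xi_j\neq\xi_i+\xi_j$. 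Projecting the g.o. relation $[A(X),X+W]=0$ onto $\m_l$ therefore kills the $W$-term and leaves $(y_i-y_j)N_{\alpha,\beta}A_{\alpha+\beta}=0$; as $N_{\alpha,\beta}\neq0$ this forces $y_i=y_j$. The same computation applied to $\xi_i-\xi_j$ (using $A_{\alpha-\beta}$ and projecting onto its module) handles the case $\xi_i-\xi_j\in R_{\mathfrak{t}}^+$.

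It remains to propagate these pairwise equalities to all indices. I would introduce the graph on $\{\xi_1,\dots,\xi_s\}$ whose edges join $\xi_i,\xi_j$ whenever $\xi_i+\xi_j$ or $\xi_i-\xi_j$ is a $\mathfrak{t}$-root; by the previous paragraph each edge yields $y_i=y_j$, so once this graph is connected we conclude $y_1=\cdots=y_q$ and the theorem follows. Connectedness should follow from the fact that, $G$ being simple, $R$ is irreducible, so that the simple $\mathfrak{t}$-roots generate $R_{\mathfrak{t}}$ with the usual chain property and their diagram is connected.

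The step I expect to be the main obstacle is precisely this last combinatorial one. One must verify connectedness uniformly, including the degenerate possibilities that some pair $\xi_i,\xi_j$ admits neither $\xi_i+\xi_j$ nor $\xi_i-\xi_j$ as a $\mathfrak{t}$-root (so that the comparison must be routed through a chain of intermediate modules), that the $\mathfrak{t}$-root system may be non-reduced, and that a pair $\alpha,\beta$ realizing a prescribed sum of $\mathfrak{t}$-roots as a sum of honest roots must genuinely exist. A clean way to settle all of these at once is to invoke the classification of generalized flag manifolds of simple compact $G$ and to check the finite list of $\mathfrak{t}$-root systems directly; a more conceptual alternative is to argue abstractly that the additive graph on the positive $\mathfrak{t}$-roots of an irreducible $R$ is always connected.
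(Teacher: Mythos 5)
Your reduction via Theorem \ref{nr} to the claim ``g.o.\ implies $y_1=\cdots=y_q$'' is the right target, and your projection computation is correct for a pair of distinct positive $\mathfrak t$-roots with $\xi_i+\xi_j\in R_{\mathfrak t}$: there the module $\m_l$ of $\xi_i+\xi_j$ is distinct from $\m_i$, $\m_j$ and from the module of $\xi_i-\xi_j$, so projecting $[A(X),X+W]=0$ onto $\m_l$ does kill the $W$-term and yields $y_i=y_j$. The genuine gap is in the other half of your edge criterion. When the comparison must go through $\xi_i-\xi_j\in R_{\mathfrak t}^+$, the target module is the one attached to $\xi_i-\xi_j$, and if $\xi_i=2\xi_j$ this module \emph{is} $\m_j$ itself; since $[A_\beta,W]\in\m_j$ as well, the projection no longer eliminates $W$, and no conclusion about $y_i,y_j$ follows. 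This is not a patchable corner case: by the Alekseevsky--Arvanitoyeorgos classification the only generalized flag manifolds of a simple $G$ admitting non-standard g.o.\ metrics are $\SO(2l+1)/\U(l)$ and $\Sp(l)/\U(1)\Sp(l-1)$, whose $\mathfrak t$-root systems are exactly $\{\xi,2\xi\}$ with $3\xi\notin R_{\mathfrak t}$ --- so the additive edge is absent, the subtractive edge is degenerate in precisely the way just described, and there are no intermediate modules to route through. Indeed your argument, which only uses the g.o.\ condition for $X\in\m$ (equivalently, the g.o.\ condition of the quotient $G/K$), cannot possibly close in these cases, because those two quotients genuinely carry g.o.\ metrics with $y_1\neq y_2$.

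The paper's proof differs at exactly this point. It first uses Corollary \ref{ns} together with the Alekseevsky--Arvanitoyeorgos result to reduce to the two exceptional families, and then exploits the \emph{full} group-level condition of Theorem \ref{g.o.group} with test vectors having nonzero $\fk$-components: choosing $T\in\mathfrak u(1)$, $H\in\fk_0$, $X_1\in\m_1$, $X_2\in\m_2$ simultaneously, the compatibility constraints $[H,K]=0$ and the mixed bracket conditions of Theorem \ref{thm} force $x=y$. To repair your proposal you would need to (i) justify the surjectivity statement $[\m_{\xi_i}^\C,\m_{\xi_j}^\C]=\m_{\xi_i+\xi_j}^\C$ you invoke, and, more importantly, (ii) replace the $\xi_i=2\xi_j$ step by an argument using test vectors with components in $\fk$, which is essentially what the paper's explicit $B_l$ and $C_l$ computations accomplish.
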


In \cite{AlAr}, the authors investigated all g.o. metrics on generalized flag manifolds (they called them flag manifolds in their paper) of compact simple Lie groups and they proved that only $\SO(2l+1)/\U(l)(l\geq2)$ and $\Sp(l)/\U(1)\Sp(l-1)(l\geq3)$ can admit g.o. metrics not homothetic to the standard metrics. As a result of Corollary \ref{ns}, we only need to consider whether there are non-naturally reductive g.o. metrics on $\SO(2l+1)(l\geq2)$ and $\Sp(l)(l\geq3)$ with the corresponding metric forms. For these two special generalized flag manifolds, the metric for (\ref{metric}) can be simplified as follows:
\begin{equation}\label{metric-s}
(\ ,\ )=B(\ ,\ )|_{\frak{u}(1)}+uB(\ ,\ )|_{\fk_0}+xB(\ ,\ )|_{\m_1}+yB(\ ,\ )|_{\m_2},
\end{equation}
where $\frak{u}(1)$ is a 1-dimensional center of $\fk$ and $\fk_0$ is a simple Lie algebra.

When apply Theorem \ref{g.o.group} to the metric form (\ref{metric-s}), we can immediately obtain the following equivalent description of g.o. metric of this form:
\begin{theorem}\label{thm}
Compact simple Lie group $G$ with the left-invariant metric induced by (\ref{metric-s}) is a geodesic orbit space if and only if for any $T\in\frak{u}(1), H\in\fk_0, X_1\in\m_1, X_2\in\m_2$, there exists $K\in\fk$ such that the following three conditions hold:
\begin{enumerate}
\item $[H,K]=0;$
\item $[(x-1)T+(x-u)H+xK+(x-y)X_2, X_1]=0;$
\item $[(y-1)T+(y-u)H+yK, X_2]=0.$
\end{enumerate}
\end{theorem}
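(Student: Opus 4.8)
The plan is to specialize Theorem~\ref{g.o.group} to the metric (\ref{metric-s}) and resolve the single bracket identity $[A(X),X+W]=0$ into its components along $\frak{u}(1)$, $\fk_0$, $\m_1$ and $\m_2$. Write an arbitrary $X\in\g$ as $X=T+H+X_1+X_2$ with $T\in\frak{u}(1)$, $H\in\fk_0$, $X_1\in\m_1$, $X_2\in\m_2$, and let $W=K\in\fk$ be the element whose existence Theorem~\ref{g.o.group} asserts. Reading the eigenvalues off (\ref{metric-s}), the metric endomorphism satisfies $A(T)=T$, $A(H)=uH$, $A(X_1)=xX_1$ and $A(X_2)=yX_2$, so $A(X)=T+uH+xX_1+yX_2$. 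Hence $G$ is a g.o.\ space if and only if for every $X$ there exists $K\in\fk$ with $[\,T+uH+xX_1+yX_2,\ T+H+K+X_1+X_2\,]=0$, and the whole problem is to separate this identity into independent pieces.

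First I would expand the bracket bilinearly and sort the terms by where they land. The relations $[\fk,\fk]\subseteq\fk$ and $[\fk,\m_i]\subseteq\m_i$ (the latter since each $\m_i$ is $\Ad(K)$-irreducible), together with the centrality of $\frak{u}(1)$ in $\fk$ (so $[T,\fk]=0$) and the vanishing of the self-brackets $[X_1,X_1]=[X_2,X_2]=0$, dispose of every term except $[X_1,X_2]$. The location of this last term is the one genuinely structural input needed.

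The key step, and the main obstacle, is to determine $[\m_1,\m_2]$. For the two spaces in question, $\SO(2l+1)/\U(l)$ and $\Sp(l)/\U(1)\Sp(l-1)$, the $\frak{t}$-roots form the set $\{\pm\xi,\pm2\xi\}$, so under the correspondence $R_{\frak t}\ni\zeta\leftrightarrow\m_\zeta$ one has $\m_1$ attached to $\xi$ and $\m_2$ to $2\xi$. Applying $[\m_\zeta,\m_\eta]\subseteq\m_{\zeta+\eta}$ (with the convention $\m_0=\fk^\C$) to all sums $\pm\xi\pm2\xi\in\{\pm\xi,\pm3\xi\}$ --- none of which is $0$ or $\pm2\xi$ --- yields $[\m_1,\m_2]\subseteq\m_1$, with no component in $\fk$ nor in $\m_2$. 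Establishing this inclusion, via Lemma~\ref{structure} and the $\frak t$-root description of the two spaces, is where the real work lies; it is exactly what makes the three-way separation below clean.

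Granting $[\m_1,\m_2]\subseteq\m_1$, the component-wise vanishing is routine. The $\fk$-part of the bracket reduces to $u[H,K]$, which vanishes iff $[H,K]=0$ since $u>0$; this is condition~(1). Collecting the $\m_1$-terms gives $(1-x)[T,X_1]+(u-x)[H,X_1]+x[X_1,K]+(x-y)[X_1,X_2]$, which up to an overall sign equals $[(x-1)T+(x-u)H+xK+(x-y)X_2,\,X_1]$; its vanishing is condition~(2). Likewise the $\m_2$-terms $(1-y)[T,X_2]+(u-y)[H,X_2]+y[X_2,K]$ equal, up to sign, $[(y-1)T+(y-u)H+yK,\,X_2]$, giving condition~(3). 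Because $\frak{u}(1)\oplus\fk_0$, $\m_1$ and $\m_2$ are independent summands, the vanishing of the full bracket for a common $K$ is equivalent to the simultaneous validity of (1)--(3), which is the statement of the theorem.
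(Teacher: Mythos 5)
Your proposal is correct and is precisely the argument the paper intends: the paper states this theorem as an immediate specialization of Theorem~\ref{g.o.group}, and your expansion of $[A(X),X+W]=0$ with $A(X)=T+uH+xX_1+yX_2$, sorted into $\fk$-, $\m_1$- and $\m_2$-components via $[\m_1,\m_2]\subseteq\m_1$ (from the $\frak t$-root structure $\{\pm\xi,\pm2\xi\}$ of the two relevant flag manifolds), supplies exactly the omitted details. The component bookkeeping and signs all check out.
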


In the following, we will prove all the g.o. metrics of the form (\ref{metric-s}) on $\SO(2l+1)(l\geq2)$ and $\Sp(l)(l\geq3)$ are naturally reductive for each case.

\subsection{Case of $\SO(2l+1)$} The painted Dynkin diagram of this case is\\
\setlength{\unitlength}{1mm}
\begin{picture}(6,6)
\put(10,0){$B_l:$}
\put(20,0){$\circ$}
\put(21.5,1){\line(1,0){11}}
\put(32,0){$\circ$}
\put(33.3,1){\line(1,0){6}}
\put(42,1){$\ldots$}
\put(52,1){$\ldots$}
\put(58.3,1){\line(1,0){6}}
\put(64,0){$\circ$}
\put(65.5,0.7){\line(1,0){10}}
\put(65.5,1.3){\line(1,0){10}}
\put(69,0.25){$>$}
\put(75,0){$\bullet$}
\put(20,-4){$\alpha_1$}
\put(32,-4){$\alpha_2$}
\put(63,-4){$\alpha_{l-1}$}
\put(75,-4){$\alpha_l$}
\end{picture}
\quad\\

Hence we can give the basis for each of the four parts in the decomposition $\frak{so}(2l+1)=\frak{u}(1)\oplus\frak{su}(l)\oplus\m_1\oplus\m_2.$\\
$\frak{u}(1)=Span_{\mathbb R}\{\sqrt{-1}H_{\alpha_l}\},$\\
$\frak{su}(l)=Span_{\mathbb R}\{A_\alpha, B_\alpha, \sqrt{-1}H_\beta|\alpha=\alpha_p+\cdots+\alpha_k,1\leq p\leq k\leq l-1;\beta=\alpha_p,1\leq p\leq l-1\},$\\
$\m_1=Span_{\mathbb R}\{A_\alpha, B_\alpha|\alpha=\alpha_k+\cdots+\alpha_{l-1}+\alpha_{l},1\leq k\leq l\},$\\
$\m_2=Span_{\mathbb R}\{A_\alpha, B_\alpha|\alpha=\alpha_k+\cdots+2\alpha_p+\cdots+2\alpha_l,1\leq k< p\leq l\}.$

Then we choose $T=\sqrt{-1}H_{\alpha_l}, H=\sum_{i=1}^{l-1}\sqrt{-1}H_{\alpha_i}, X_1=B_{\alpha_l}, X_2=A_{\alpha_1+\cdots+\alpha_{l-1}+2\alpha_l}$ and we assume the metric of the form (\ref{metric-s}) is a g.o. metric, by Theorem \ref{thm}, there exists some $K\in\fk$ such that
\begin{enumerate}
\item $[H,K]=0;$
\item $[(x-1)T+(x-u)H+xK+(x-y)X_2, X_1]=0;$
\item $[(y-1)T+(y-u)H+yK, X_2]=0.$
\end{enumerate}
From (2), we have $[(x-1)\sqrt{-1}H_{\alpha_l}+(x-u)\sum_{i=1}^{l-1}\sqrt{-1}H_{\alpha_i}+xK,B_{\alpha_l}]=(y-x)[A_{\alpha_1+\cdots+\alpha_{l-1}+2\alpha_l},B_{\alpha_l}]$. 

By Lemma \ref{structure}, we have 
$$[(x-1)\sqrt{-1}H_{\alpha_l}+(x-u)\sum_{i=1}^{l-1}\sqrt{-1}H_{\alpha_i}+xK,B_{\alpha_l}]=(y-x)N_{\alpha_1+\cdots+\alpha_{l-1}+2\alpha_l,-\alpha_l}A_{\alpha_1+\cdots+\alpha_{l-1}+\alpha_l}.$$

We next prove that there is no $A_{\alpha_1+\cdots+\alpha_{l-1}+\alpha_l}$-component in $[(x-1)\sqrt{-1}H_{\alpha_l}+(x-u)\sum_{i=1}^{l-1}\sqrt{-1}H_{\alpha_i}+xK,B_{\alpha_l}]$, in fact, we only need to show $K$ doesn't contain $B_{\alpha_1+\cdots+\alpha_{l-1}}$-component by Lemma \ref{structure}. If $K$ contains $B_{\alpha_1+\cdots+\alpha_{l-1}}$-component, then
\begin{align}
[\sum_{i=1}^{l-1}\sqrt{-1}H_{\alpha_i},B_{\alpha_1+\cdots+\alpha_{l-1}}]&=-\sum_{i=1}^{l-1}(\alpha_1+\cdots+\alpha_{l-1})(H_{\alpha_i})A_{\alpha_1+\cdots+\alpha_{l-1}}\\
&=-\sum_{i=1}^{l-1}<\alpha_1+\cdots+\alpha_{l-1},\alpha_i>A_{\alpha_1+\cdots+\alpha_{l-1}}
\end{align}
From $B_l$'s Cartan matrix, we know $[\sum_{i=1}^{l-1}\sqrt{-1}H_{\alpha_i},B_{\alpha_1+\cdots+\alpha_{l-1}}]\neq0$, which is a contradiction to (1) above. As a result, there is no $A_{\alpha_1+\cdots+\alpha_{l-1}+\alpha_l}$-component in $[(x-1)\sqrt{-1}H_{\alpha_l}+(x-u)\sum_{i=1}^{l-1}\sqrt{-1}H_{\alpha_i}+xK,B_{\alpha_l}]$. Hence, $x=y$. By Theorem \ref{nr}, g.o. metrics on $\SO(2l+1)$ of the form (\ref{metric-s}) are naturally reductive with respect to $\SO(2l+1)\times\U(l)$.

\subsection{Case of $\Sp(l)$} The painted Dynkin diagram of this case is \\
\setlength{\unitlength}{1mm}
\begin{picture}(6,6)
\put(10,0){$C_l:$}
\put(20,0){$\bullet$}
\put(21.5,1){\line(1,0){10.6}}
\put(32,0){$\circ$}
\put(33.5,1){\line(1,0){6}}
\put(42,1){$\ldots$}
\put(52,1){$\ldots$}
\put(58,1){\line(1,0){6}}
\put(64,0){$\circ$}
\put(65.3,0.7){\line(1,0){10}}
\put(65.3,1.3){\line(1,0){10}}
\put(69,0.25){$<$}
\put(75,0){$\circ$}
\put(20,-4){$\alpha_1$}
\put(32,-4){$\alpha_2$}
\put(63,-4){$\alpha_{l-1}$}
\put(75,-4){$\alpha_l$}
\end{picture}
\quad\\

The basis of each part of the decomposition $\frak{sp}(l)=\frak{u}(1)\oplus\frak{sp}(l-1)\oplus\m_1\oplus\m_2$ are as follows:\\
$\frak{u}(1)=Span_{\mathbb R}\{\sqrt{-1}H_{\alpha_1}\},$\\
$\frak{sp}(l-1)=Span_{\mathbb R}\{A_\alpha, B_\alpha, \sqrt{-1}H_\beta|\beta=\alpha_i(2\leq i\leq l); \alpha=\alpha_p+\cdots+\alpha_k(2\leq p\leq k\leq l)\mbox{ or }\alpha=\alpha_p+\alpha_{p+1}+\cdots+2\alpha_k+\cdots+2\alpha_{l-1}+\alpha_l(2\leq p\leq k\leq l-1)\},$\\
$\m_1=Span_{\mathbb{R}}\{A_\alpha, B_\alpha|\alpha=\alpha_1+\cdots+\alpha_k(1\leq k\leq l)\mbox{ or }\alpha=\alpha_1+\alpha_2+\cdots+2\alpha_p+\cdots+2\alpha_{l-1}+\alpha_l(2\leq p\leq l-1)\},$\\
$\m_2=Span_{\mathbb R}\{A_{2\alpha_1+\cdots+2\alpha_{l-1}+\alpha_l},B_{2\alpha_1+\cdots+2\alpha_{l-1}+\alpha_l}\}.$

We assume the metric of the form (\ref{metric-s}) on $\Sp(l)$ is a g.o. metric, then for $T=\sqrt{-1}H_{\alpha_1}, H=\sum_{i=2}^{l}\sqrt{-1}H_{\alpha_i}, X_1=B_{\alpha_1}, X_2=A_{2\alpha_1+\cdots+2\alpha_{l-1}+\alpha_l}$, by Theorem \ref{thm}, there exists some $K\in\fk$ such that
\begin{enumerate}
\item $[H,K]=0;$
\item $[(x-1)T+(x-u)H+xK+(x-y)X_2, X_1]=0;$
\item $[(y-1)T+(y-u)H+yK, X_2]=0.$
\end{enumerate}
From (2) above, we have 
$[(x-1)\sqrt{-1}H_{\alpha_1}+(x-u)\sum_{i=2}^{l}\sqrt{-1}H_{\alpha_i}+xK, B_{\alpha_1}]=(y-x)[A_{2\alpha_1+\cdots+2\alpha_{l-1}+\alpha_l},B_{\alpha_1}]$. 

By Lemma \ref{structure}, we have 
$$[(x-1)\sqrt{-1}H_{\alpha_1}+(x-u)\sum_{i=2}^{l}\sqrt{-1}H_{\alpha_i}+xK,B_{\alpha_1}]=(y-x)N_{2\alpha_1+\cdots+2\alpha_{l-1}+\alpha_l,-\alpha_1}A_{\alpha_1+2\alpha_2+\cdots+2\alpha_{l-1}+\alpha_l}.$$

We next prove that there is no $A_{\alpha_1+2\alpha_2+\cdots+2\alpha_{l-1}+\alpha_l}$-component in $[(x-1)\sqrt{-1}H_{\alpha_1}+(x-u)\sum_{i=2}^{l}\sqrt{-1}H_{\alpha_i}+xK,B_{\alpha_1}]$, in fact, we only need to show $K$ doesn't contain $B_{2\alpha_2+\cdots+2\alpha_{l-1}+\alpha_l}$-component by Lemma \ref{structure}. If $K$ contains $B_{2\alpha_2+\cdots+2\alpha_{l-1}+\alpha_l}$-component, then
\begin{align}
[\sum_{i=2}^{l}\sqrt{-1}H_{\alpha_i},B_{2\alpha_2+\cdots+2\alpha_{l-1}+\alpha_l}]&=-\sum_{i=2}^{l}(2\alpha_2+\cdots+2\alpha_{l-1}+\alpha_l)(H_{\alpha_i})A_{2\alpha_2+\cdots+2\alpha_{l-1}+\alpha_l}\\
&=-\sum_{i=2}^{l}<2\alpha_2+\cdots+2\alpha_{l-1}+\alpha_l,\alpha_i>A_{2\alpha_2+\cdots+2\alpha_{l-1}+\alpha_l}
\end{align}
From $C_l$'s Cartan matrix, we know $[\sum_{i=2}^{l}\sqrt{-1}H_{\alpha_i},B_{2\alpha_2+\cdots+2\alpha_{l-1}+\alpha_l}]\neq0$, which is a contradiction to (1) above. As a result, there is no $A_{\alpha_1+2\alpha_2+\cdots+2\alpha_{l-1}+\alpha_l}$-component in $[(x-1)\sqrt{-1}H_{\alpha_1}+(x-u)\sum_{i=2}^{l}\sqrt{-1}H_{\alpha_i}+xK,B_{\alpha_1}]$. Hence, $x=y$. By Theorem \ref{nr}, g.o. metrics on $\Sp(l)$ of the form (\ref{metric-s}) are naturally reductive with respect to $\Sp(l)\times(\U(1)\times\Sp(l-1))$.

We complete the proof of Theorem \ref{maintheorem}.
\begin{remark}
For the details of the relationship between painted Dynkin diagrams and generalized flag manifolds, the readers can refer to \cite{Al} and \cite{Ar} for more information.
\end{remark}

\end{document}